\newtheorem{MainTheorem}{Theorem}
\newtheorem{theorem}{Theorem}[section]
\newtheorem{proposition}[theorem]{Proposition}
\theoremstyle{definition}
\newtheorem{remark}[theorem]{Remark}
\newcommand{\evalat}[1]{\bigr\rvert_{#1}}
\newcommand{\set}[2]{\ensuremath{\{#1 \,\colon #2\}}}
\renewcommand\theenumi{\@alph\c@enumi}
\renewcommand\theenumii{\@alph\c@enumii}
\renewcommand\theenumiii{\@alph\c@enumiii}
\renewcommand\theenumiv{\@alph\c@enumiv}
\def\@map#1#2[#3]{\mbox{$#1 \colon #2 \longrightarrow #3$}}
\def\map#1#2{\@ifnextchar [{\@map{#1}{#2}}{\@map{#1}{#2}[#2]}}
\newcommand{\norm}[1]{\lVert #1 \rVert}
\newcommand{\N}{\ensuremath{\mathbb{N}}}
\newcommand{\Z}{\ensuremath{\mathbb{Z}}}
\newcommand{\R}{\ensuremath{\mathbb{R}}}
\newcommand{\SI}{\ensuremath{\mathbb{S}^1}}
\newcommand{\K}{\ensuremath{\mathcal{K}}}
\newcommand{\Cyl}{\ensuremath{\SI\times\K}}
\renewcommand{\P}{\ensuremath{\mathscr{P}}}
\newcommand{\U}{\ensuremath{\mathscr{U}}}
\newcommand{\Li}{\ensuremath{\mathfrak{L}_1}}
\newcommand{\Lip}{\ensuremath{\mathfrak{L}_1^+}}
\newcommand{\rotn}[1][F]{\ensuremath{\rho_{\scriptscriptstyle #1}}}
\newcommand{\oml}[1][T]{\ensuremath{\omega_{\scriptscriptstyle #1}}}
\newcommand{\rr}[1][\rho(F)]{\ensuremath{\varPhi_{\scriptscriptstyle #1}}}
\newcommand{\rro}{\rr[\rho]}
\DeclareMathOperator{\Cl}{Cl}
\DeclareMathOperator{\Bd}{Bd}
\DeclareMathOperator{\Orb}{Orb}
\newcommand{\Orbn}{\Orb^{\raise0.1ex\hbox{-}}}
\DeclareMathOperator{\Con}{Const}
\DeclareMathOperator{\Rot}{Rot}
\DeclareMathOperator{\Card}{Card}
\DeclareMathOperator{\gr}{graph}
\providecommand{\norminf}[1]{\lVert#1\rVert_\infty}
\title[Uncountable attracting sets in the cylinder]{
Coexistence of uncountably many\\
attracting sets for\\
skew-products on the cylinder}
\author[Ll. Alsed\`a]{Llu\'{\i}s Alsed\`a}
\address{Departament de Matem\`atiques, Edifici Cc, Universitat
Aut\`onoma de Barcelona, 08913 Cerdanyola del Vall\`es, Barcelona,
Spain}
\email{alseda@mat.uab.cat}
\author[S. Costa]{Sara Costa}
\address{Departament de Matem\`atiques, Edifici Cc, Universitat
Aut\`onoma de Barcelona, 08913 Cerdanyola del Vall\`es, Barcelona,
Spain}
\email{sara.costa.romero@gmail.com}
\subjclass[2010]{Primary 37E10, Secondary 37E15.}
\thanks{The authors have been partially supported by the MEC grant
number MTM2008-01486 and MTM2011--26995--C02--01.}
\subjclass{Primary: 37C55, 34D08, 37C70}
\keywords{Quasiperiodically forced system, rotation interval,
attracting set, coexistence of attractors, semiconjugacy}
\gdef\footnotemark{relax}
\begin{document}
\begin{abstract}
The aim of this paper is to show that the existence of attracting sets
for quasiperiodically forced systems can be extended to appropriate
skew-products on the cylinder, homotopic to the identity, in such a
way that the general system will have (at least) one attracting set
corresponding to every irrational rotation number $\rho$ in the
rotation interval of the base map. This attracting set is a copy of
the attracting set of the system quasiperiodically forced by a (rigid)
rotation of angle $\rho$. This shows the co-existence of uncountably
many attracting sets, one for each irrational in the rotation interval
of the basis.
\end{abstract}
\maketitle
\section{Introduction}
We want to show that the existence of attracting sets for
quasiperiodically forced systems can be extended to a class
of skew-products on the cylinder which are homotopic to
the identity. These systems have an attracting set
corresponding to every irrational rotation number $\rho$ in the
rotation interval of the base map. This attracting set is a copy of
the attracting set of the system quasiperiodically forced by a (rigid)
rotation of angle $\rho$. In particular we show that the systems
from our class can have uncountably many coexisting attracting sets
(one for each irrational in the rotation interval of the base map).

To better explain the above and to state the main result of the paper
we need to recall the basics of rotation theory on the circle, define
what we understand by an attracting set and to fix some notation.

In what follows the circle $\R/\Z$ will be denoted by $\SI$. To
simplify the notation, given $x\in\R$, we will identify $[x] \in \SI$
with its representative in $[0,1)$ (that is, with the fractional part
of $x$, denoted by $\lfloor x \rfloor$).

It is well known that there exists a natural projection
{\map{e}{\R}[\SI]} defined by $e(x):= \lfloor x \rfloor$ and that any
continuous circle map $f$ lifts to a continuous map {\map{F}{\R}}
(called a \emph{lifting of $f$}) in such a way that $f\circ e = e\circ
F$. If $F$ is a lifting of $f$, then $F + n$ is also a lifting of $f$
for every $n \in \Z$ and there exists $d \in \Z$ such that $F(x+1) =
F(x) + d$ for every $x \in \R$. Such integer $d$ is called the
\emph{degree of $f$}.

In this paper we are only interested in continuous degree one circle
maps. These are continuous maps such that $F(x+1) = F(x) + 1$ for
every $x\in\R$ and every lifting $F$. We denote by $\Li$ the set of
all liftings of continuous circle maps of degree one.

For each $F \in \Li$ and $x \in \R$ we define the
\emph{$F$-rotation number of $x$} as
\[
   \rotn(x) := \limsup_{n\to\infty}\frac{F^n(x)-x}{n},
\]
and the \emph{rotation set of $F$} as:
\[
 \Rot(F) := \set{\rotn(x)}{x\in\R}.
\]
From \cite{Ito} we know that $\Rot(F)$ is a closed interval
of $\R$.

To simplify the notation we will denote by $\Rot_I(F)$ the set of
irrationals in $\Rot(F)$.

A circle map $f$ is said to be \emph{non-decreasing} whenever it
has a non-decreasing lifting $F$.

We denote the cylinder by {\Cyl} (where $\K$ is either a
closed interval of $\R$ containing zero, $\R^+$ or $\R$ itself).
We consider the class of skew-products on {\Cyl} of the form:
\begin{equation}\label{SOri}
\begin{pmatrix} \theta_{n+1}\\ x_{n+1} \end{pmatrix} =
T\begin{pmatrix}\theta_{n}\\x_{n} \end{pmatrix}
\qquad\text{where}\qquad
T\begin{pmatrix}\theta\\x \end{pmatrix} =
\begin{pmatrix} f(\theta)\\ p(x)q(\theta)\end{pmatrix},
\end{equation}
$f$ is a continuous circle map of degree one with a lift $F$ such that
$\Rot_I(F)$ is non-empty, $q$ is a continuous map from $\SI$ to $\K$
and $p$ is a continuous map from $\K$ to itself.

The function $p$ plays an essential role in assuring that models of
the above type have attracting sets (in a sense to be made precise
below). Standard assumptions found in the literature on the function
$p$ are, $p(0) = 0$ and, for instance,
\begin{itemize}
\item $p$ is strictly increasing and strictly concave when $\K$
is $\R^+$ (see \cite{Keller,Haro}) or strictly concave in the
positives and strictly convex in the negatives when $\K$ is the
whole $\R$ (see \cite{GOPY,AlCo}). Concrete examples or this map are
$\tanh(x)$ in \cite{Keller,GOPY} and $x^\alpha$ with $0 < \alpha < 1$
in \cite{Haro}.
\item $p$ is unimodal and $\K = [0,1]$ or bimodal and $\K = [-1,1]$
(see \cite{AM,AlCo}).
\end{itemize}
Also, some additional assumptions are demanded on the function $q$
to assure the existence of attracting sets. On the other hand,
in \cite{Keller,GOPY,AlCo,AM} it is required that $q$ vanishes at some
point to assure that the attracting set is pinched, whereas in
\cite{Haro} the function $q$ is assumed to be log-integrable with
respect to an ergodic measure of the basis map.

In what follows we will denote the closure of a set $X$ by $\Cl(X)$.
Also we say that a set $A$ is $f$-invariant whenever $f(A) = A$.

We look for \emph{attracting sets} of System~\eqref{SOri}.
% which are not contained in the circle $x \equiv 0$.
They are defined as follows. Let $\mu$ be an ergodic measure of $f$
and
let $\U$ be a measurable $f$-invariant set such that $\mu(\U) = 1$.
Let {\map{\varphi}{\U}[\K]} be a correspondence whose graph is
$T$-invariant on $\U$ (i.e. $T(\gr(\varphi)) = \gr(\varphi)).$ The
closure of $\gr(\varphi)$ will be called an \emph{attracting set with
support $\U$ and generated by $\varphi$} whenever
\[
 \lim_{n\to\infty} \norm{T^n(\theta, x) - T^n(\theta,z(x))}
 = 0
\]
for every $\theta \in \U$ and $x$ in a subset of $\K$ of positive
Lebesgue measure, and some $z(x) \in \varphi(\theta)$ (in particular,
$\oml(\theta, x) \subset \oml(\theta, \varphi(\theta))$).
% A special class of these attracting sets are the \emph{pinched}
% ones. They intersect the circle $x \equiv 0$ on a set which
% is dense in $\U$ but $\varphi(\theta)$ is disjoint from the
% circle $x \equiv 0$ for $\mu$-a.e. $\theta \in \U$.

For every $\rho\in\R$ we denote the rotation by angle $\rho$ by
$\rro(\theta) := \theta + \rho \pmod{1}$.

The main result of the paper is the following theorem which shows
that the attracting sets of System~\eqref{SOri} can be obtained
from the attracting sets of quasiperiodically forced systems which can
be considered subsystems of System~\eqref{SOri}.

\begin{MainTheorem}\label{t-no_num}
Consider a system of the form \eqref{SOri}. To every
$\rho \in \Rot_I(F)$ we can associate
a measurable $f$-invariant set $\U_\rho \subset \SI,$
a continuous non-decreasing circle map of degree one $h_\rho,$ and
an $f$-ergodic measure $\mu_\rho$ such that
$\mu_\rho(\U_\rho)=1,$
$\rotn(e^{-1}(\Cl(\U_\rho))) = \rho,$
{\map{h_\rho\evalat{\U_\rho}}{\U_\rho}[h_\rho(\U_\rho)]}
is a homeomorphism and $h_\rho(\U_\rho)$ is a dense
$\rr[\rho]$-invariant set.
Additionally, the sets $\Cl(\U_\rho)$ are pairwise disjoint.

Assume that, for every $\rho \in \Rot_I(F),$ the system
\begin{equation}\label{SRot}
\begin{pmatrix} \theta_{n+1}\\ x_{n+1} \end{pmatrix} =
S_\rho\begin{pmatrix}\theta_{n}\\x_{n} \end{pmatrix}
\qquad\text{where}\qquad
S_\rho\begin{pmatrix}\theta\\x \end{pmatrix} =
\begin{pmatrix}
  \rro(\theta)\\
  p(x)q(h_\rho^{-1}(\theta))
\end{pmatrix},
\end{equation}
has an attracting set with support $h_\rho(\U_\rho)$ which is the
closure of the graph of a correspondence
{\map{\varphi_\rho}{h_\rho(\U_\rho)}[\K]}.
Then, the closure of the graph of $\varphi_\rho \circ
h_\rho$ is an attracting set of $T$ with support $\U_\rho.$
Thus, whenever $\Rot(F)$ is non-degenerate,
$T$ has uncountably many attracting sets coexisting dynamically.
\end{MainTheorem}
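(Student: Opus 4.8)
The plan is to establish Theorem~\ref{t-no_num} in two stages. The first stage is one–dimensional: for each $\rho\in\Rot_I(F)$ I would extract from the rotation theory of $f$ the data $\U_\rho$, $h_\rho$ and $\mu_\rho$ with the listed properties. The second stage is dynamical but essentially formal: I would realise $T$ over the base $\U_\rho$ and $S_\rho$ over the base $h_\rho(\U_\rho)$ as conjugate skew-products, and then read off the attracting set of $T$ from that of $S_\rho$. The genuine difficulty is concentrated in the first stage.

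\emph{Construction of the rotation data.} Fix $\rho\in\Rot_I(F)$. Since $\rho$ lies in the closed interval $\Rot(F)$, one-dimensional rotation theory for degree one circle maps provides a non-decreasing $G_\rho\in\Li$ with constant rotation number $\rho$ and a compact $f$-invariant set on which the circle map $g_\rho$ of $G_\rho$ coincides with $f$ (start from an $F$-orbit realising the rotation number $\rho$ in an ordered way and interpolate monotonically across its complementary gaps). Because $\rho$ is irrational, Poincaré's semiconjugacy theorem applies to $g_\rho$: there is a continuous non-decreasing degree one circle map $h_\rho$ with $h_\rho\circ g_\rho=\rro\circ h_\rho$ on $\SI$, a unique $g_\rho$-invariant probability measure $\mu_\rho$ supported on a minimal set, which I take to be $\Cl(\U_\rho)$ with $\U_\rho$ defined as the set of points of that minimal set at which $h_\rho$ is not locally constant, and $(h_\rho)_*\mu_\rho$ is Lebesgue measure. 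As $f=g_\rho$ on $\Cl(\U_\rho)$, the measure $\mu_\rho$ is $f$-ergodic, $\U_\rho$ is measurable and $f$-invariant with $\mu_\rho(\U_\rho)=1$, the restriction $h_\rho\evalat{\U_\rho}$ is a homeomorphism onto $h_\rho(\U_\rho)$, and $h_\rho(\U_\rho)$ has full Lebesgue measure, hence is dense and $\rro$-invariant. Finally $\rotn(e^{-1}(\Cl(\U_\rho)))=\rho$, because there $F$ coincides with $G_\rho$, whose rotation number is the constant $\rho$; since distinct irrationals give distinct constants, the sets $\Cl(\U_\rho)$ are automatically pairwise disjoint.

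\emph{The conjugacy and the transfer of the attracting set.} Define \map{H_\rho}{\U_\rho\times\K}[h_\rho(\U_\rho)\times\K] by $H_\rho(\theta,x):=(h_\rho(\theta),x)$; since $h_\rho\evalat{\U_\rho}$ is a homeomorphism, so is $H_\rho$. Using $h_\rho^{-1}(h_\rho(\theta))=\theta$ on $\U_\rho$ together with the semiconjugacy $h_\rho\circ f=\rro\circ h_\rho$, a direct check gives $H_\rho\circ T=S_\rho\circ H_\rho$ on $\U_\rho\times\K$, hence $H_\rho\circ T^n=S_\rho^n\circ H_\rho$ for all $n$; unwinding this shows that the fibre component of $T^n(\theta,x)$ over $\theta\in\U_\rho$ equals that of $S_\rho^n(h_\rho(\theta),x)$, because $q(h_\rho^{-1}(\rro^n(h_\rho(\theta))))=q(f^n(\theta))$. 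Also $H_\rho(\gr(\varphi_\rho\circ h_\rho))=\gr(\varphi_\rho)$. Consequently $T(\gr(\varphi_\rho\circ h_\rho))=\gr(\varphi_\rho\circ h_\rho)$ follows by conjugating the $S_\rho$-invariance of $\gr(\varphi_\rho)$; and for the attraction property one fixes $\theta\in\U_\rho$, writes $\psi:=h_\rho(\theta)$, takes the positive-measure set $B_\psi\subset\K$ and the points $w(x)\in\varphi_\rho(\psi)$ furnished by the attracting set of $S_\rho$, sets $z(x):=w(x)\in(\varphi_\rho\circ h_\rho)(\theta)$, and observes that $\norm{T^n(\theta,x)-T^n(\theta,z(x))}=\norm{S_\rho^n(\psi,x)-S_\rho^n(\psi,w(x))}\to0$ since only the identical fibre components contribute. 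This is exactly the assertion that $\Cl(\gr(\varphi_\rho\circ h_\rho))$ is an attracting set of $T$ with support $\U_\rho$ generated by $\varphi_\rho\circ h_\rho$.

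\emph{Coexistence and main obstacle.} If $\Rot(F)$ is non-degenerate, then $\Rot_I(F)$ is uncountable, so the construction produces an attracting set $\Cl(\gr(\varphi_\rho\circ h_\rho))$ for each such $\rho$; each of these projects to $\SI$ inside $\Cl(\U_\rho)$ and the latter are pairwise disjoint, so the attracting sets are pairwise disjoint and coexist for the single map $T$. The hard part will be the first stage: one must assemble the one-dimensional material so that $h_\rho$ is simultaneously continuous, non-decreasing and of degree one, $\U_\rho$ is genuinely $f$-invariant with $h_\rho$ injective on it and with dense image, $\mu_\rho$ is $f$-ergodic of full measure, and the constant rotation number $\rho$ is realised on a closed $f$-invariant set on which $f$ agrees with a non-decreasing lift — this last point, realising an arbitrary irrational of the rotation interval by an ordered and hence monotonically extendable invariant set, being the crux. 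Once it is in place, the conjugacy $H_\rho$ makes everything else routine.
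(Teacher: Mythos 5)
Your second stage (the fibered conjugacy $H_\rho=(h_\rho,\mathrm{Id})$ and the transfer of invariance and attraction through the identity \eqref{conjugate}-type computation) is exactly the paper's argument and is fine \emph{granted} the first stage. The genuine gap is in the first stage, and it is concentrated in your definition of $\U_\rho$ as ``the set of points of the minimal set at which $h_\rho$ is not locally constant.'' That set does not make $h_\rho$ injective. Let $\P$ denote the minimal set and let $(a,b)$ be a connected component of $\SI\setminus\P$. Since the unique invariant measure $\mu_\rho$ is non-atomic with support $\P$, one has $\mu_\rho([a,b])=0$, and for the semiconjugacy (which is, up to normalization, the distribution function of $\mu_\rho$) this forces $h_\rho(a)=h_\rho(b)$. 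On the other hand, $\P$ is perfect, so $a$ is accumulated by $\P$ from the left and $b$ from the right; every neighbourhood of $a$ or of $b$ meets $\mathrm{supp}(\mu_\rho)$ and therefore $h_\rho$ is \emph{not} locally constant at either point. Hence $a,b\in\U_\rho$ with $h_\rho(a)=h_\rho(b)$, and $h_\rho\evalat{\U_\rho}$ fails to be a homeomorphism whenever $\P\neq\SI$ (which is the typical case here, since $f_{\alpha_\rho}$ is not a homeomorphism). Indeed, for the distribution-function semiconjugacy your set is all of $\P$. The correct construction must delete from $\P$ the full forward orbits and the backward orbits within $\P$ of all endpoints of complementary gaps (and, to keep injectivity along orbits, the points with two preimages in $\P$). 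To know that this deletion costs nothing one needs the Auslander--Katznelson structure theorem: each $\theta\in\P$ has at most two preimages in $\P$ and only countably many have exactly two, so the deleted set is countable, hence $\mu_\rho$-null since points have measure zero; one must then check separately that what remains is still $f$-invariant and that $h_\rho$ is injective on it (the latter uses minimality plus irrationality of $\rho$ to rule out intervals of constancy meeting $\P$ in interior points). None of this is in your proposal, and it is the actual content of the one-dimensional stage.

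A secondary but real omission: the existence of a non-decreasing $G_\rho\in\Lip$ with $\rho(G_\rho)=\rho$ that agrees with $F$ on a closed invariant set carrying the dynamics is asserted via ``take an ordered orbit and interpolate,'' and you yourself flag it as the crux without proving it. The paper obtains it cleanly from water functions: $F_{\alpha}=(\min\{F,F_l+\alpha\})_u$ is non-decreasing, coincides with $F$ off $\Con(F_\alpha)$, and $\alpha\mapsto\rho(F_\alpha)$ is continuous and onto $\Rot(F)$, so some $\alpha_\rho$ realizes $\rho$; one then checks $\Cl(\U_\rho)\cap\Con(f_{\alpha_\rho})=\emptyset$ so that $f$ and $f_{\alpha_\rho}$ genuinely agree on $\Cl(\U_\rho)$, which is also what makes $\mu_\rho$ $f$-invariant and $f$-ergodic (via $f^{-1}(A)\cap\U_\rho=f_{\alpha_\rho}^{-1}(A)\cap\U_\rho$), not merely $f_{\alpha_\rho}$-ergodic. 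With these two points repaired, the rest of your argument goes through as written.
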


\begin{remark}
The map $q \circ h_\rho^{-1}$ is continuous in $h_\rho(\U_\rho)$ which
is dense in $\SI$. Hence, if $q \circ h_\rho^{-1}$ is discontinuous,
it has only jump discontinuities in the complement of
$h_\rho(\U_\rho).$
\end{remark}

Next we derive some consequences from Theorem~\ref{t-no_num}
organized in a sequence of remarks.

\begin{remark}[On the invariant measures of
$T$]\label{r-liftedmeasure}
The measure $\mu_\rho$ lifted to the closure of the graph of
$\varphi_\rho \circ h_\rho$ is an invariant measure of $T$. Moreover,
this measure is ergodic if and only if the closure of the graph of
$\varphi_\rho \circ h_\rho$ is a minimal set of $T$. In particular
this is a criterion to decide the undecomposability of the attracting
set into several smaller attracting sets.
\end{remark}

\begin{remark}[On the number of pieces of an
attracting set]\label{r-attsets}
Theorem~\ref{t-no_num} tells us that the attracting sets of systems of
the form \eqref{SOri} are graphs of correspondences from $f$-invariant
subsets $\U \subset\SI$ to the fibres.
As it has been shown in \cite{AlCo} for the case when $f$ is an
irrational rotation and
\begin{itemize}
\item $\K = \R$, $p$ is strictly increasing in $\R$ and strictly
concave in $\R^+$ and strictly convex in $\R^-$; or
\item $p$ is bimodal and $\K = [-1,1],$
\end{itemize}
there are two possibilities for such an attracting set: either its
closure is a minimal attractor or it splits into two different minimal
attractors and each of these attractors is the closure of the graph of
a map from $\U$ to the fibres. This dichotomy is inherited by the
attracting sets of the systems of the form \eqref{SOri} with $f$ an
arbitrary continuous function of degree one that are obtained by
transporting the attracting sets obtained in the case of rotations as
stated in Theorem~\ref{t-no_num}.

Observe that when $q$ is negative, the orbits keep alternating between
$\R^+$ and $\R^-$. Thus, typically, there will be an attracting set
which is a 2-periodic orbit of function graphs.
When $q$ is positive, System~\eqref{SOri} can be
split into two (one restricted to $\R^+$ and the other one restricted
to $\R^-$). Consequently we get two attracting sets intersecting at $x
\equiv 0$.

A particular example of the first case (that is, when the attracting
set is a 2-periodic orbit of function graphs --- $q$ negative) is the
following (see Figure~\ref{fig:ak-discont-neg}): consider
System~\eqref{SOri} with
$f = \rr[\tfrac{\sqrt{5}-1}{2}],$
$\K = \R,$
\[
p(x) = \begin{cases}
 \tanh(x)                                    & \text{if $x\ge 0$}\\
 \tfrac{\tanh(x-2) + \tanh(2)}{1-\tanh(2)^2} & \text{if $x\le 0$}
\end{cases}
\ \text{and}\
q(\theta)=\begin{cases}
       7(\cos(2\pi\theta)-2) & \text{if $\theta \in
[0,\tfrac{1}{2})$}\\
       5(\cos(2\pi\theta)-4) & \text{otherwise}.
      \end{cases}
\]

Also, an example of the case when the attracting set has two minimal
components intersecting at $x \equiv 0$ ($q$ positive) can be
obtained from System~\eqref{SOri} by taking
$f = \rr[\tfrac{\sqrt{5}-1}{2}],$
$\K = [-1,1],$
\[
p(x) = \begin{cases}
        x(1-x)                 & \text{if $x\geq 0$}\\
        \frac{1}{2}x(x+1)(x+2) & \text{if $x\le 0$}
       \end{cases}
\ \text{and}\
q(\theta)=\begin{cases}
       2.1 |\cos(2\pi\theta)| & \text{if $\theta \in
[0,\tfrac{1}{2})$}\\
       2.5 |\sin(2\pi\theta)| & \text{otherwise}.
      \end{cases}
\]
In Figure~\ref{fig:lm-discont} we show the two attracting sets for
this system (one in red and the other one in blue).
\begin{figure}\centering
\begin{minipage}[t]{0.42\textwidth}
\captionsetup{width=\textwidth}
\includegraphics[width=0.65\textwidth,angle=270]{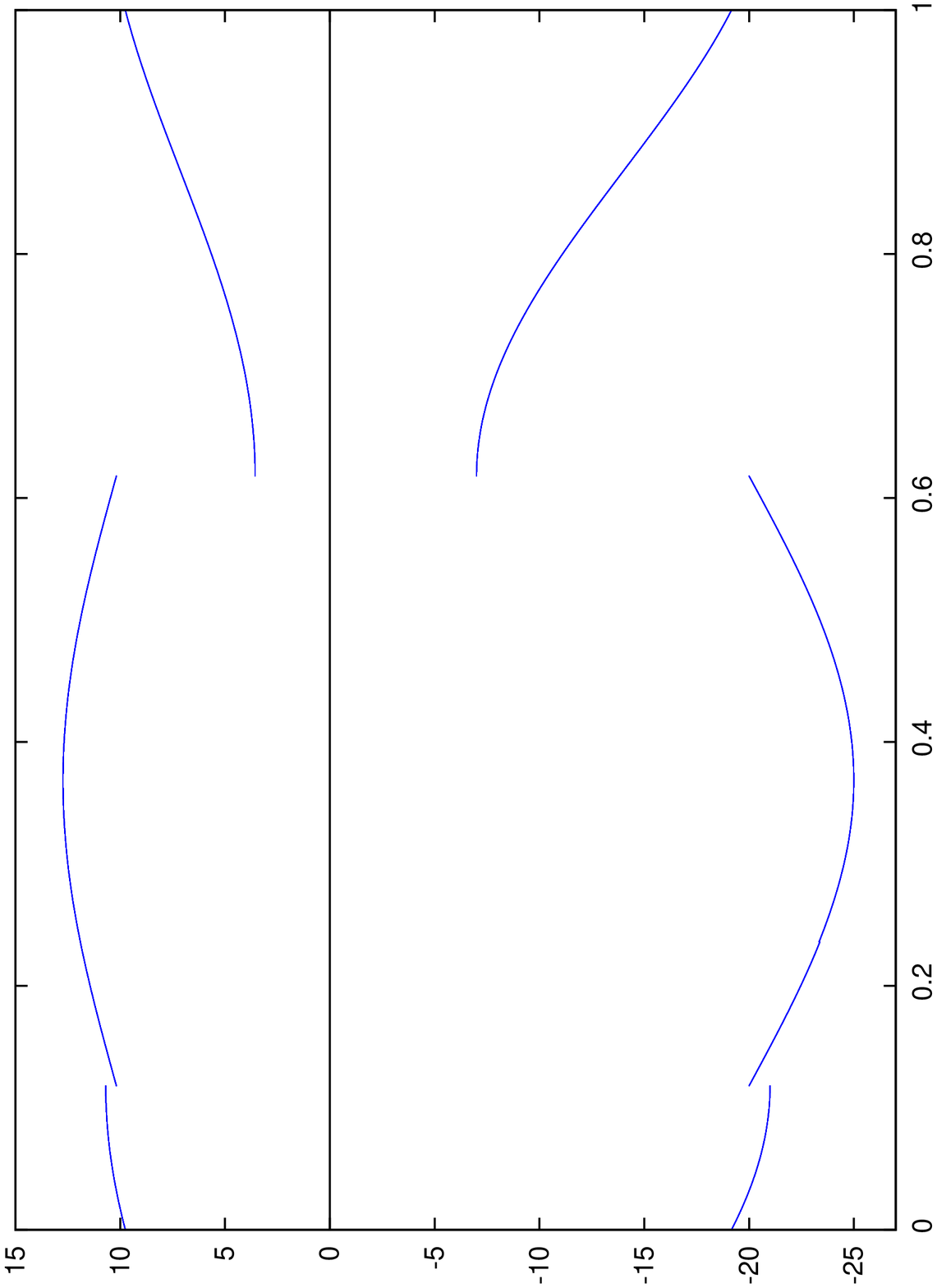}
\caption{The unique attracting set for System~\eqref{SOri}
when $p$ is monotone and $q$ is negative.}\label{fig:ak-discont-neg}
\end{minipage}\qquad
\begin{minipage}[t]{0.42\textwidth}
\captionsetup{width=\textwidth}
\includegraphics[width=0.65\textwidth,angle=270]{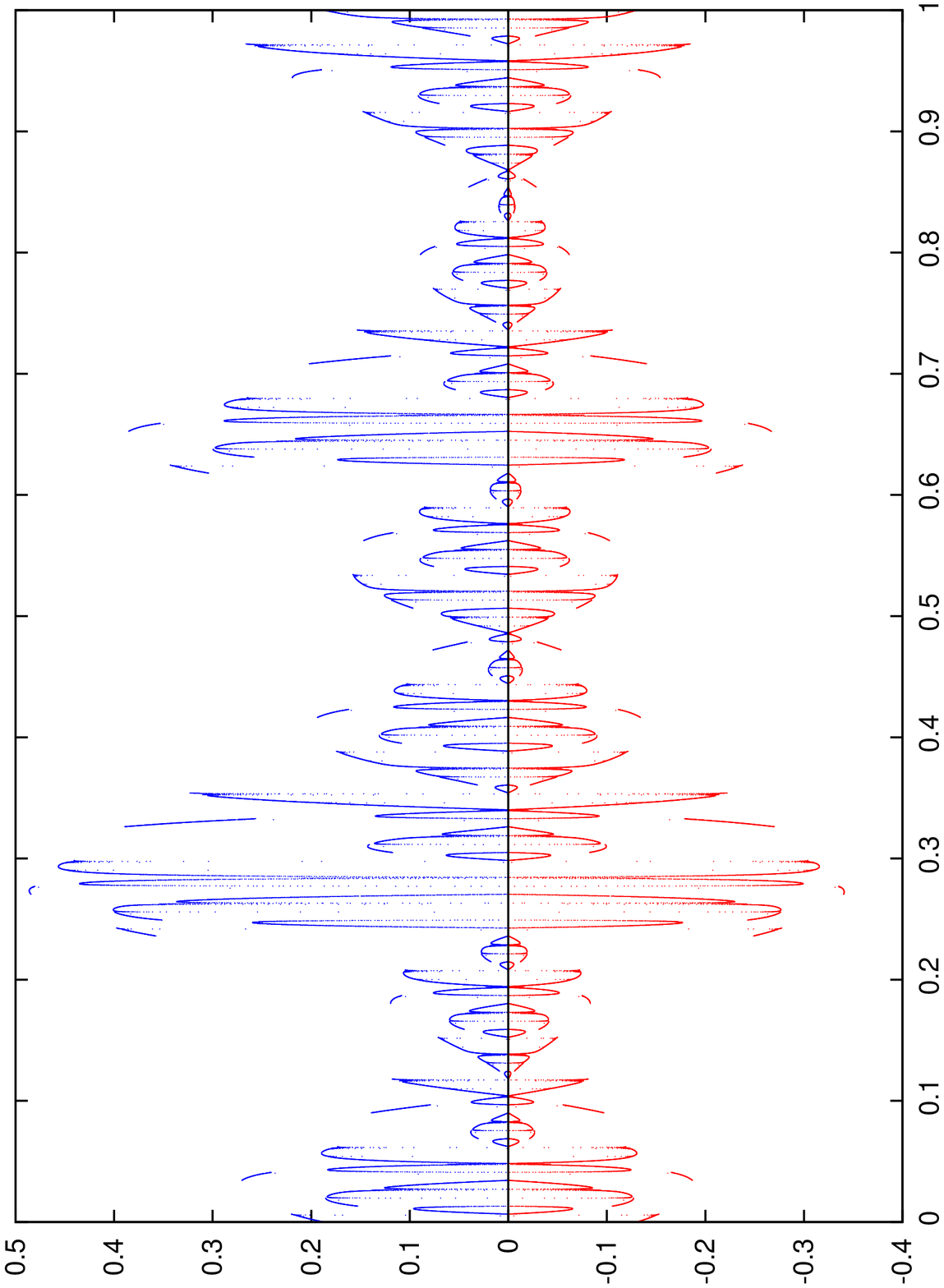}
\caption{The two attracting sets (one in red and the other one in
blue) for System~\eqref{SOri} when $p$ is bimodal and $q$ is
positive.}\label{fig:lm-discont}
\end{minipage}
\end{figure}
\end{remark}

\begin{remark}[On the strangeness of the attracting sets]
An attracting set with support $U$ and generated by $\varphi$ will be
called \emph{strange} if the map $\varphi\evalat{U}$ is discontinuous
in a dense subset of $U$.

Assume that the attracting set with support $h_\rho(\U_\rho)$
generated by $\varphi_\rho$ is strange. Then, there exists a set
$D_\rho,$ dense in $h_\rho(\U_\rho),$ such that $\varphi_\rho$ is
discontinuous at every point of $D_\rho$. Since
$h_\rho\evalat{\U_\rho}$ is a homeomorphism,
$\varphi_\rho \circ h_\rho$
is discontinuous in
$\left(h_\rho\evalat{\U_\rho}\right)^{-1}(D_\rho)$
and this set is dense in $\U_\rho.$ Therefore the attracting set
generated by $\varphi_\rho \circ h_\rho$ is also strange.
\end{remark}

\begin{remark}[On the Lyapunov exponents]
In general the concrete values of the vertical Lyapunov exponents of
Systems~\eqref{SOri} and \eqref{SRot} on the attracting sets, when
they exist, do not have any relation except for the fact that the
attractiveness of the attracting sets implies that both of them are
non-positive.
\end{remark}

Examples of skew-products which satisfy the hypothesis required for
System~\eqref{SRot} can be found in \cite{AlCo}. They are extensions
of known results from \cite{Keller,Haro,AM} to the case where the
function $q$ may have jump discontinuities in an
invariant zero-Lebesgue measured subset of $\SI$.

To prove Theorem~\ref{t-no_num} we need some more knowledge of
rotation theory on the circle and its relation with water functions
and to study the dynamics of non-decreasing degree one circle maps
without periodic points. This will be done in the next section.
Finally, in Section~\ref{proof_main_theorem} we will prove
Theorem~\ref{t-no_num}.

\section{Water functions and non-decreasing degree one
circle maps}\label{WF-AK}

We will start this section with a survey on rotation theory in the
circle and water functions (see \cite[Section~3.7]{ALM}).

We begin by introducing some notation. In what follows $\Lip$ will
denote the class of non-decreasing maps of $\Li$.

If $f$ is a continuous circle map then $\Con(f)$ denotes the
open set of all points $\theta \in \SI$ for which there exists a
neighbourhood $U \ni \theta$ such that $f\evalat{U}$ is constant.
Analogously, if $F$ is a lifting of $f$, then $\Con(F)$ denotes the
open set of all points $x \in \R$ for which there exists a
neighbourhood $U \ni x$ such that $F\evalat{U}$ is constant.
Observe that $\Con(F) = e^{-1}(\Con(f))$.

For every $F \in \Li$ there exists a circle map $F^e$ defined by
\[
F^e:= e \circ F \circ (e\evalat{[0,1)})^{-1}
\]
(observe that $e\evalat{[0,1)}$ is a homeomorphism). Clearly, $F^e$
is continuous and has $F$ as a lifting. The map $F^e$ is called the
\emph{projection of $F$ to $\SI$}. Observe that whenever $F$ is a
lifting of $f,$ then $F^e = f$.

Given $F \in \Li$ we define the \emph{lower} and \emph{upper} liftings
as follows:
\begin{align*}
F_l(x) & := \inf\set{F(y)}{y\ge x}, \text{and}\\
F_u(x) & :=\sup\set{F(y)}{y\le x}.
\end{align*}
% \[
% F_l(x) := \inf\set{F(y)}{y\ge x} \qquad \text{and} \qquad
% F_u(x) :=\sup\set{F(y)}{y\le x}.
% \]
Clearly, $F_l$ and $F_u$ belong to $\Lip$, $F_l \le F \le F_u$ and, if
$F$ is non-decreasing then $F = F_l = F_u$. Moreover, if $F,G \in \Li$
are such that $F \le G,$ then $F_l \le G_l$ and $F_u \le G_u$.

Now, given $F \in \Li$ and $0 \le \alpha \le \norminf{F -F_l}$, we
define the \emph{water function of level $\alpha$} as
\[
F_\alpha := (\min\{F,F_l+\alpha\})_u
\]
(observe that, from all said above, $\min\{F,F_l+\alpha\} \in \Li$
and, hence, $F_\alpha$ is well defined). The projection to $\SI$ of
the lifting $F_\alpha$, $F_\alpha^e$, will be denoted by $f_\alpha$.

From the definition and the properties of the upper and lower liftings
it follows that $F_\alpha \in \Lip$ for every $\alpha$, $F_0 = F_l$,
$F_{\scriptscriptstyle \norminf{F-F_l}} = F_u$ and $F_\alpha \le
F_{\alpha'}$ whenever $\alpha \le \alpha'$. Moreover, for every
$\alpha$, $F_\alpha$ coincides with $F$ in the complement of
$\Con(F_\alpha).$ then, since $e(\Con(F_\alpha)) = \Con(f_\alpha)$ it
follows that $f_\alpha$ coincides with $f$ in the complement of
$\Con(f_\alpha)$.

If $F \in \Lip$, then
\[
\rotn(x) = \lim_{n\to\infty}\frac{F^n(x)-x}{n}
\]
and it is independent on $x\in\R$. This number is denoted
as $\rho(F)$ and called the \emph{rotation number of $F$}.
It is well known that the map $F \longmapsto \rho(F)$
from $\Lip$ to $\R$ is continuous and non-decreasing.

On the other hand, for every $F \in \Li$,
$\Rot(F) = [\rho(F_l), \rho(F_u)]$.

From all said above it follows that the map
$\alpha \longmapsto \rho(F_\alpha)$
from
$[0, \norminf{F-F_l}]$ to $\Rot(F)$
is continuous, onto and non-decreasing.

Now we dynamically study the maps from $\Lip$ with irrational rotation
number. One of the important facts that we will use in this study is
the semiconjugacy of circle maps with degenerate rotation interval to
a rotation. Let $f$ be a continuous circle map with lifting $F\in\Lip$
such that $\rho(F)$ is irrational. From \cite{Denjoy} we know
that $f$ is semiconjugate to the irrational rotation $\rr$ by a
\emph{non-decreasing} map $h$: $h \circ f = \rr \circ h$.

The result we look for is the following.

\begin{proposition}\label{set-U}
Let $f$ be a continuous circle map with lifting $F\in\Lip$ such that
$\rho(F)$ is irrational. Then $f$ has a measurable invariant set $\U
\subset \SI$ and a unique ergodic measure $\mu$ such that
$\mu(\U)=1,$
$\Cl(\U)$ is disjoint from $\Con(f),$
$h\evalat{\U}$ is a homeomorphism and
$h(\U)$ is a dense $\rr$-invariant set.
If $f$ is not a homeomorphism, then $\U$ is nowhere dense in $\SI$.
\end{proposition}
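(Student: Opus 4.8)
The plan is to distinguish two cases according to whether $f$ is a homeomorphism or not, and to build $\U$ from the Denjoy semiconjugacy $h$ together with the unique invariant measure of $f$. First I would recall that since $\rho(F)$ is irrational, $f$ has no periodic points and therefore is uniquely ergodic; call $\mu$ its unique invariant (hence ergodic) measure. The semiconjugacy $h\colon\SI\to\SI$ with $h\circ f=\rr\circ h$ is non-decreasing of degree one, and the push-forward $h_*\mu$ is a rotation-invariant probability measure, hence Lebesgue measure on $\SI$. If $f$ is a homeomorphism, then $h$ is itself a homeomorphism, one takes $\U=\SI$, $h(\U)=\SI$, and the conclusions are immediate (note $\Con(f)=\emptyset$ in that case since a non-decreasing homeomorphic lift is strictly increasing). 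The substance of the proof is the non-homeomorphism case.

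The key construction: when $f$ is not a homeomorphism, $h$ collapses a countable family of "plateau" intervals (the maximal intervals on which $h$ is constant), whose $h$-images form a countable $\rr$-orbit-invariant set $B\subset\SI$; these plateaux are precisely where $f$ fails to be injective, and $\Con(f)$ (on which $f$ is locally constant) is contained in their union. I would set $\U:=\SI\setminus h^{-1}(\Orb_{\rr}(\text{countable exceptional set}))$, arranged so that $h(\U)$ is the complement in $\SI$ of a countable $\rr$-invariant set — in particular $h(\U)$ is dense and $\rr$-invariant. One checks: (i) $\U$ is $f$-invariant, because the collapsed set is $f$-invariant by the intertwining relation; (ii) $h|_\U$ is injective (all nondegenerate fibres of $h$ have been removed) and, being a restriction of a continuous monotone surjection with its collapsed intervals excised, is a homeomorphism onto $h(\U)$; (iii) $\mu(\U)=1$: the removed set is $h^{-1}$ of a countable set, and since $h_*\mu$ is Lebesgue, each point-preimage has $\mu$-measure zero (a nondegenerate plateau $J$ would force $h_*\mu$ to have an atom, contradiction), so the countable union is $\mu$-null; (iv) $\Cl(\U)\cap\Con(f)=\emptyset$: enlarge the removed set, if necessary, so that it contains all the (countably many) plateau intervals together with their endpoints and orbits — since $\Con(f)$ lies inside the union of the (open) plateaux, removing their closures and taking closure afterward keeps us clear of $\Con(f)$; finally (v) $\U$ is nowhere dense: its complement contains a dense set (the preimages of a dense $\rr$-orbit sit inside the removed plateaux, which are themselves dense open), so $\U$ has empty interior, and being the complement of a dense open set it is closed-nowhere-dense after passing to closure — more precisely $\Cl(\U)$ misses each plateau interior, and the plateaux are dense, so $\Cl(\U)$ has empty interior.

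The main obstacle is item (iv) combined with (iii): one must remove enough of the circle to separate $\Cl(\U)$ from $\Con(f)$ and to make $h|_\U$ injective, yet little enough that $\mu(\U)=1$ and $h(\U)$ stays dense and $\rr$-invariant. The balance is achieved because everything that must be removed — the plateau intervals of $h$, their endpoints, and $f$-orbits thereof — is a countable collection of intervals whose $h$-image is a single countable $\rr$-invariant set, and $\mu$ gives each such interval measure zero (else $h_*\mu\ne\text{Leb}$). I would also need the elementary fact, available from the water-function/rotation-theory discussion recalled above, that for $F\in\Lip$ with irrational rotation number the map $f$ has no periodic points, hence the classical Poincaré–Denjoy structure (unique minimal set, unique invariant measure, semiconjugacy) applies; the measure-theoretic bookkeeping with $h_*\mu=\text{Leb}$ is then routine.
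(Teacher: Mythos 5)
Your overall strategy --- remove from $\SI$ the $h$-preimage of a countable $\rr$-invariant set $B$ containing the $h$-images of all nondegenerate fibres of $h$ --- is viable and in places cleaner than the paper's argument, which instead works inside the unique minimal set $\P$ of $f$, deletes the grand orbits of the endpoints of the complementary gaps, and proves the injectivity of $h\evalat{\U}$ and the equality $\mu(\U)=1$ by separate ad hoc arguments (a return-to-an-interval argument and an atom argument, respectively). Your identities $h(\U)=\SI\setminus B$ and $f^{-1}(h^{-1}(B))=h^{-1}(B)$, together with the observation that $h_*\mu$ must be Lebesgue measure so that every fibre of $h$ is $\mu$-null, dispatch the invariance, density, injectivity and full-measure claims efficiently. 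Nevertheless there are two genuine problems.

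First, the case split is wrong: if $f$ is a homeomorphism it does \emph{not} follow that $h$ is a homeomorphism. A Denjoy counterexample is a circle homeomorphism with irrational rotation number whose minimal set is a Cantor set, and every non-decreasing semiconjugacy to the rotation collapses the complementary gaps; hence $h$ is not injective and the choice $\U=\SI$ violates the requirement that $h\evalat{\U}$ be a homeomorphism. The fix is to drop the dichotomy altogether: your general construction applies verbatim to homeomorphisms (where moreover $\Con(f)=\emptyset$), with nowhere-density claimed only when $f$ is not injective. Second, in step (v) you assert that the plateaux of $h$ are dense when $f$ is not a homeomorphism. This is true but is not a consequence of anything you have established; it is essentially equivalent to the nowhere-density of the minimal set $\P$, which is a substantive part of the Auslander--Katznelson theorem that the paper quotes explicitly. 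To close this you must either invoke that result and check that $\U\subset\P$ (which does hold, since $h(\P)$ is a closed nonempty $\rr$-invariant set, hence all of $\SI$, which forces $h$ to be constant on each gap of $\P$, so the gaps lie inside plateaux), or give an independent proof that the union of the interiors of the nondegenerate fibres of $h$ is dense; as written the claim is unsupported.
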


To prove Proposition~\ref{set-U} we will use the results by Auslander
and Katznelson \cite{AK} on continuous circle maps without periodic
points. To relate both situations observe that if $f$ is a continuous
circle map of degree one with lifting $F \in \Lip$ having irrational
rotation number then, from \cite[Lemma~3.7.2]{ALM} it follows that $f$
has no periodic points. On the other hand, if $f$ has no periodic
points then, from \cite[Sections~3.4--6 and Lemma~3.7.2]{ALM} it
follows that $f$ must have degree one and, if $F$ is a lifting of $f$,
then $\Rot(F)$ is degenerate to an irrational.

Given a circle map $f$ we will denote the \emph{forward} and
\emph{backward} orbit of a point $\theta \in \SI$ by
$\Orb_f(\theta)$ and $\Orbn_f(\theta),$ respectively:
\begin{align*}
\Orb_f(\theta) &:= \set{f^n(\theta)}{n \in \Z^+},\\
\Orbn_f(\theta) &:= \bigcup_{n \in \Z^+} f^{-n}(\theta).
\end{align*}

The the next theorem is a version of \cite[Theorem~2]{AK}. The
unique ergodicity of $f$ is proved at the very end of \cite{AK}. The
fact that $\P$ is uncountable follows from the arguments from
the third paragraph of \cite[Page~380]{AK}.

\begin{theorem}\label{AK-theo}
Let $f$ be a continuous circle map without periodic points. Then $f$
has a unique minimal set $\P$ and $\oml[f](\theta)=\P$ for all
$\theta\in\SI$. Moreover, $f$ has a unique ergodic measure (and,
consequently, its support is precisely $\P$).
The set $\P$  is uncountable and, if $f$ is not a
homeomorphism, it is nowhere dense in $\SI$.
Additionally,
for any $\theta \in \P$, $1 \le \Card(f^{-1}(\theta)\cap \P) \le 2$
and the set of $\theta\in\P$ such that
$\Card(f^{-1}(\theta)\cap \P) = 2$ is countable.
Moreover, for every $\theta \in \P$, there is at most one
$\theta'\in \Orb_f(\theta) \cup\left(\Orbn_f(\theta) \cap \P\right)$
such that
$\Card(f^{-1}(\theta') \cap \P) = 2$.
\end{theorem}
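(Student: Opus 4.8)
The plan is to reduce everything to the structure theory of Auslander and Katznelson \cite{AK} and then read off the preimage statements from the semiconjugacy to a rotation. First I would record that, since $f$ has no periodic points, by \cite[Sections~3.4--6 and Lemma~3.7.2]{ALM} the map $f$ has degree one and $\Rot(F) = \{\rho\}$ for an irrational $\rho$; consequently $f$ is semiconjugate to the rigid rotation $\rro$ by a continuous non-decreasing degree-one map $h$, $h \circ f = \rro \circ h$ (the factor map onto the rotation furnished by \cite{AK}, compare the semiconjugacy recalled above for maps in $\Lip$). The existence of the unique minimal set $\P$, the identity $\oml[f](\theta) = \P$ for every $\theta \in \SI$, the unique ergodicity (so that the unique ergodic measure $\mu$ has support $\P$), the uncountability of $\P$ and its nowhere density when $f$ is not a homeomorphism are precisely the content of \cite[Theorem~2]{AK}, with unique ergodicity taken from the last part of \cite{AK} and uncountability from the third paragraph of \cite[p.~380]{AK}. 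Thus it only remains to establish the three cardinality statements.

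Next I would isolate the two structural facts about $h$ provided by \cite{AK}: the restriction $h\evalat{\P}$ maps $\P$ onto $\SI$, and each fibre $h^{-1}(y) \cap \P$ consists of one or two points, the two-point fibres occurring exactly over the (necessarily countable) collapsed intervals of $h$. Granting this, the bound $1 \le \Card(f^{-1}(\theta)\cap\P) \le 2$ is immediate: minimality gives $f(\P) = \P$, so every $\theta \in \P$ has at least one preimage in $\P$; and since $h \circ f = \rro \circ h$ with $\rro$ injective, every $\theta$-preimage lies in the single fibre $h^{-1}(\rro^{-1}(h(\theta))) \cap \P$, which has at most two points. For countability of $Z := \set{\theta \in \P}{\Card(f^{-1}(\theta)\cap\P)=2}$, I would note that if $a \ne b$ are two preimages in $\P$ of some $\theta$, then $\rro(h(a)) = h(\theta) = \rro(h(b))$ forces $h(a) = h(b)$, so $a,b$ are the endpoints of one collapsed interval; the assignment sending $\theta$ to this interval is injective because both endpoints map to $\theta = f(a) = f(b)$, and collapsed intervals are countable.

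The heart of the matter is the last assertion, and here I would argue by a monotonicity of fibre sizes along a rotation orbit. Fix $\theta \in \P$, put $y := h(\theta)$, and for $n \in \Z$ set $F_n := h^{-1}(y + n\rho) \cap \P$, so that $\Card(F_n) \in \{1,2\}$ by the structural fact above and $\Orb_f(\theta) \cup (\Orbn_f(\theta) \cap \P) \subseteq \bigcup_{n\in\Z} F_n$ because $h(f^{\pm k}(\theta)) = y \pm k\rho$. From $h \circ f = \rro \circ h$ and $f(\P)=\P$ one checks that $f(F_n) = F_{n+1}$ for every $n$; hence $\Card(F_{n+1}) \le \Card(F_n)$, so the sequence $\bigl(\Card(F_n)\bigr)_{n\in\Z}$, taking values in $\{1,2\}$, strictly decreases at most once. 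A point $\theta' \in F_{n+1}$ belongs to $Z$ only if its two $\P$-preimages exhaust $F_n$, which forces $\Card(F_n) = 2$ and $\Card(F_{n+1}) = 1$; thus each $Z$-point corresponds to the unique drop from $2$ to $1$, and there is at most one $Z$-point in all of $\bigcup_{n\in\Z} F_n$, a fortiori in $\Orb_f(\theta) \cup (\Orbn_f(\theta)\cap\P)$.

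The main obstacle is the at-most-two-to-one structure of $h\evalat{\P}$, that is, the fact that fibres of $h$ meet the minimal set in at most two points; this is the substantive input I would import from \cite{AK} (equivalently, it is where one uses that the collapsed intervals contain no interior points of $\P$). Once this is in hand, the cardinality bound, the countability of $Z$, and the fibre-size monotonicity that yields the one-per-orbit statement are all short and formal.
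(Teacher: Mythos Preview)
The paper does not give its own proof of this theorem: it is stated as a quotation of \cite[Theorem~2]{AK}, with unique ergodicity attributed to the end of \cite{AK} and uncountability to \cite[p.~380]{AK}, and no further argument is supplied. Your proposal is consistent with this---you invoke exactly the same citations for the structural part---and you then go beyond the paper by actually sketching derivations of the three cardinality claims from the semiconjugacy $h\circ f=\rro\circ h$ and the at-most-two-to-one structure of $h\evalat{\P}$ taken from \cite{AK}. Those derivations are correct: the bound on $\Card(f^{-1}(\theta)\cap\P)$ follows since all $\P$-preimages of $\theta$ lie in the single $h$-fibre over $\rro^{-1}(h(\theta))$; the countability of the two-preimage set follows from the injective assignment to collapsed intervals; and the fibre-size monotonicity argument (using $f(F_n)=F_{n+1}$, hence $\Card(F_{n+1})\le\Card(F_n)$, with a $Z$-point forcing a $2\to1$ drop) cleanly yields the at-most-one-per-orbit statement. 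In short, your approach is the paper's approach (citation of \cite{AK}) plus a correct unpacking of the preimage assertions that the paper leaves implicit.
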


\begin{remark}\label{AK-theo_noconst}
The set $\P$ is disjoint from $\Con(f)$.
Otherwise, there exists a point $\theta \in U \cap \P$ where
$U$ is a connected component of $\Con(f)$. Also, by the minimality of
$\P$, there exists $n \in \N$ such that $f^n(\theta) \in U$. Thus,
since $f\evalat{U}$ is constant, $f(\theta) = f^{n+1}(\theta)$ is
periodic; a contradiction.
\end{remark}

\begin{remark}\label{m_density}
For every $\theta \in \P$ we have $\mu(\theta) = 0,$ where $\mu$ is
the ergodic measure from Theorem~\ref{AK-theo}.
To show it we
assume that there exists $\theta \in \P$ such that $\mu(\theta) > 0$
and set
$A_0 = \{\theta\}$ and $A_i = f^{-1}(A_{i-1}) \cap \P$
for $i=1,2,\ldots.$
Since $\mu$ is $f$-invariant and has support $\P$, it follows by
induction that $\mu(A_i) = \mu(\{\theta\}) > 0$ for every
$i=1,2,\ldots.$
Suppose now that there exists $\nu \in \SI$ and $l > n \ge 0$ such
that $\nu \in A_l \cap A_n$. Then,
\[
 \theta = f^l(\nu) = f^{l-n}(f^n(\nu)) = f^{l-n}(\theta)
\]
and $\theta$ is periodic; a contradiction. This shows that
the sets $A_i$ are pairwise disjoint. Hence,
$\mu \left(\bigcup_{n\ge 0} A_i \right) = \sum_{n=0}^\infty \mu(A_i)$
is infinite, which contradicts the fact that $\mu$ is a finite
measure.
\end{remark}

Now we are ready to prove Proposition~\ref{set-U}.

In what follows we will denote the boundary (that is, the endpoints
when $X$ is homeomorphic to an interval of the real line) of a set $X$
by $\Bd(X).$

\begin{proof}[Proof of Proposition~\ref{set-U}]
As we have said before, $f$ has no periodic points by
\cite[Lemma~3.7.2]{ALM}. So, we can use Theorem~\ref{AK-theo}. We
get that $f$ has a unique ergodic measure and the support of this
measure is $\P$.

Next we construct the set $\U$ by removing some undesirable orbits
from $\P$. To do it we denote by $\mathscr{S}$ the family of all
connected components of $\SI\setminus\P$ and we set
\begin{align*}
\widetilde{\P} &:= \bigcup_{C \in \mathscr{S}} \Bd(C),\\
\U^{c}         &:=
      \left( \bigcup_{\theta \in \widetilde{\P}}
         \left(\bigcup_{n=0}^\infty \Orbn_f(f^n(\theta))\right)
      \right) \cap \P \text{ and} \\
\U & := \P \setminus\U^{c}.
\end{align*}
From the above definitions it follows that
$f(\U^{c}) \subset \U^{c}$ and
$f^{-1}(\theta)\cap \P \subset \U^{c}$
for every $\theta \in \U^{c}.$
Consequently, $f(\U) = \U$ because $f(\P) = \P.$

Since $\P$ is closed the set $\widetilde{\P}$ is at most countable
and, hence, $\U^{c}$ is also at most countable by
Theorem~\ref{AK-theo}. Therefore, $\U^{c}$ is measurable and, by
Remark~\ref{m_density}, $\mu(\U^{c}) = 0.$ Also, the set $\P$ consists
of uncountably many orbits of $f.$
Since $\P$ is measurable and $\mu(\P) = 1$ it follows that
$\U = \P \setminus\U^{c}$ is measurable and $\mu(\U) = 1.$

Notice that, since $\P$ is minimal,
$\P = \Cl(\Orb(\theta)) \subset \Cl(\U) \subset \P$
for every $\theta \in \U$. Hence, $\Cl(\U) = \P.$

The fact that $\Cl(\U)$ is disjoint from $\Con(f)$
follows from Remark~\ref{AK-theo_noconst}. Also, if $f$ is not a
homeomorphism, then $\Cl(\U)$ is nowhere dense in $\SI$ by
Theorem~\ref{AK-theo}.

Since $f$ and $\rr$ are semiconjugate by $h$,
$\rr(h(\U)) = h(f(\U)) = h(\U).$
Consequently, since $\rho(F)$ is irrational,
$\SI = \Cl\left(\Orb_{\rr}(\theta)\right) \subset \Cl(h(\U))$
for every $\theta\in h(\U)$ and, hence, $h(\U)$ is dense in $\SI$.

Now we will show that {\map{h\evalat{\U}}{\U}[h(\U)]} is a
homeomorphism. By the continuity of $h$ we only have to prove that
$h\evalat{\U}$ is one-to-one. Otherwise, there exists an open interval
$J \subset \SI$ such that $\Bd(J) \subset \U$ and $\Card(h(\Bd(J)) =
1$. Since $h$ is non-decreasing, $h\evalat{\Cl(J)}$ is constant. If
$\P \cap J = \emptyset,$ then $J \in \mathscr{S}$ and so $\Bd(J)
\subset \widetilde{\P} \subset \SI\setminus\U$. Thus, there exists a
point $\theta \in \P \cap J.$ By the minimality of $\P$, there exists
$n \in \N$ such that $f^n(\theta) \in J$. Therefore,
\[
h(\theta) = h(f^n(\theta)) = \rr^n(h(\theta));
\]
a contradiction with the fact that $\rho(F)$ is irrational.
\end{proof}

\begin{remark}
It is not difficult to show that in the situation of the above
proposition $f^{-1}(\U) = \U$ and ${\map{f^{-1}\evalat{\U}}{\U}}$ is
a homeomorphism.
\end{remark}

\section{Proof of Theorem~\ref{t-no_num}}\label{proof_main_theorem}

Since
$\alpha \longmapsto \rho(F_\alpha)$
is a continuous map from
$[0, \norminf{F-F_l}]$ onto $\Rot(F),$
to every $\rho \in \Rot_I(F)$ we can associate an
$\alpha_\rho \in [0,\norminf{F-F_l}]$ such that
$\rho(F_{\alpha_\rho}) = \rho.$

Then, for every $\rho \in \Rot_I(F)$, we denote respectively by
$\U_\rho,$ $h_\rho$ and $\mu_\rho$ the set $\U,$ the map $h$ and the
measure $\mu$ given by Proposition~\ref{set-U} for the map
$f_{\alpha_\rho}$ with lifting $F_{\alpha_\rho}$.
Then we have that $\U_\rho$ is measurable and
$f_{\alpha_\rho}$-invariant,
$\mu_\rho(\U_\rho) = 1,$
{\map{h_\rho\evalat{\U_\rho}}{\U_\rho}[h_\rho(\U_\rho)]}
is a homeomorphism and $h_\rho(\U_\rho)$ is a dense
$\rr[\rho]$-invariant set.

Since $f_{\alpha_\rho}$ coincides with $f$ in the complement of
$\Con(f_{\alpha_\rho})$ and $\Cl(\U_\rho)$ is disjoint from
$\Con(f_{\alpha_\rho}),$
\begin{equation}\label{coincidWF}
f\evalat{\Cl(\U_\rho)} =
f_{\alpha_\rho}\evalat{\Cl(\U_\rho)}.
\end{equation}
Hence, $f(\U_\rho) = \U_\rho$ because $\U_\rho$ is
$f_{\alpha_\rho}$-invariant.
Also, since $F_{\alpha_\rho}$ is non-decreasing,
\[
\rotn(e^{-1}(\Cl(\U_\rho))) =
\rotn[F_{\alpha_\rho}](e^{-1}(\Cl(\U_\rho))) =
\rho(F_{\alpha_\rho}) = \rho.
\]
Consequently, if $\rho,\rho' \in \Rot_I(F)$ and $\rho\ne\rho',$ then
$\Cl(\U_\rho) \cap \Cl(\U_{\rho'}) = \emptyset$
since otherwise, for every
$\theta \in e^{-1}(\Cl(\U_\rho) \cap \Cl(\U_{\rho'})),$
$\rho = \rotn(\theta) = \rho';$ a contradiction.

Now we will prove that $\mu_\rho$ is an ergodic measure of $f$.
First we will show that $\mu_\rho$ is $f$-invariant.
From \eqref{coincidWF} we get
\begin{equation}\label{bothmapspreimages}
f^{-1}(A) \cap \U = f^{-1}_{\alpha_\rho}(A) \cap \U
\end{equation}
for every $A \subset \SI.$
Hence, since $\mu_\rho(\U) = 1,$
\[
\mu_\rho(f^{-1}_{\alpha_\rho}(A)) =
\mu_\rho(f^{-1}_{\alpha_\rho}(A) \cap \U) =
\mu_\rho(f^{-1}(A) \cap \U) =
\mu_\rho(f^{-1}(A))
\]
for every measurable set $A \subset \SI.$
Consequently, $\mu_\rho$ is $f$-invariant because it is
$f_{\alpha_\rho}$-invariant.

To prove the $f$-ergodicity of $\mu_\rho$ we will show that
$\mu_\rho(A) \in \{0,1\}$
for every measurable set $A \subset \SI$ such that
$\mu_\rho(f^{-1}(A) \bigtriangleup A)=0$
(where $\bigtriangleup$ denotes the symmetric difference).
By \eqref{bothmapspreimages} we have
\begin{align*}
(f^{-1}(A) \bigtriangleup A) \cap\U
&= (f^{-1}(A)\cap\U) \bigtriangleup (A\cap\U)
 = (f^{-1}_{\alpha_\rho}(A)\cap\U) \bigtriangleup (A\cap\U)\\
&=(f^{-1}_{\alpha_\rho}(A) \bigtriangleup A) \cap\U.
\end{align*}
Consequently,
\[
\mu_\rho(f^{-1}_{\alpha_\rho}(A) \bigtriangleup A) =
\mu_\rho(f^{-1}(A) \bigtriangleup A) = 0
\]
and, since $\mu_\rho$ is $f_{\alpha_\rho}$-ergodic,
$\mu_\rho(A) \in \{0,1\}.$

Now we assume that, for every $\rho \in \Rot_I(F),$ the system
\begin{equation}
\begin{pmatrix} \theta_{n+1}\\ x_{n+1} \end{pmatrix} =
S_\rho\begin{pmatrix}\theta_{n}\\x_{n} \end{pmatrix}
\qquad\text{where}\qquad
S_\rho\begin{pmatrix}\theta\\x \end{pmatrix} =
\begin{pmatrix}
  \rro(\theta)\\
  p(x)q(h_\rho^{-1}(\theta))
\end{pmatrix},
\end{equation}
has an attracting set with support $h_\rho(\U_\rho)$ which is the
closure of the graph of a correspondence
{\map{\varphi_\rho}{h_\rho(\U_\rho)}[\K]}.

We will prove that the closure of the graph of the map
{\map{\widetilde{\varphi}_\rho}{\U_\rho}[\K]}, where
$\widetilde{\varphi}_\rho := \varphi_\rho \circ h_\rho,$ is an
attracting set of $T$ with support $\U_\rho.$ To this end we will
use the map
$H:=(h_\rho, \mathrm{Id})$
which is a homeomorphism
from $\U_\rho \times \K$ to $h_\rho(\U_\rho) \times \K$.

Recall that $h_\rho$ is a semiconjugacy between
$f_{\alpha_\rho}$ and $\rro.$ Thus,
\[
h_\rho \circ f_{\alpha_\rho} = \rro \circ h_\rho
\]
and hence, from \eqref{coincidWF},
\begin{equation}\label{semiconjugacy}
h_\rho(f(\theta))=
h_\rho(f_{\alpha_\rho}(\theta)) =
\rr(h_\rho(\theta))
\end{equation}
for every $\theta \in \U_\rho.$ Therefore,
\begin{align*}
H(T(\theta, z))
 &= \left(h_\rho(f(\theta)), p(z)q(\theta)\right) =
\left(\rro(h_\rho(\theta)),p(z)q(h^{-1}_\rho(h_\rho(\theta)))\right)
\\
 &= S_\rho(H(\theta, z)),
\end{align*}
for every $\theta \in \U_\rho$ and $z\in \K.$
Thus, for each $n \ge 0$ and $z \in \K$,
\begin{equation}\label{conjugate}
\pi_x\left(S_\rho^n(h_\rho(\theta), x)\right) =
\pi_x\left(S_\rho^n(H(\theta,x))\right) =
\pi_x\left(H(T^n(\theta,x))\right) =
\pi_x\left(T^n(\theta,x)\right)
\end{equation}
where {\map{\pi_x}{\Cyl}[\K]} denotes the projection with
respect to the second component.

First we will show that the graph of $\widetilde{\varphi}_\rho$
is $T$-invariant. That is, by \eqref{semiconjugacy}, we have to
prove
\[
\pi_x\left(T(\theta,x)\right) \in
\widetilde{\varphi}_\rho(f(\theta)) =
\varphi_\rho(\rr(h_\rho(\theta)))
\]
for every
$\theta \in \U_\rho$ and
$x \in \widetilde{\varphi}_\rho(\theta).$
By assumption, the graph of
$\varphi_\rho$ is $S_\rho$-invariant:
$
\pi_x\left(S_\rho(h_\rho(\theta), x)\right) \in
\varphi_\rho(\rr(h_\rho(\theta)))
$
for every
$\theta \in \U_\rho$ and
$x \in \varphi_\rho(h_\rho(\theta)) =
\widetilde{\varphi}_\rho(\theta).$
From \eqref{conjugate}, we get
$
\pi_x\left(S_\rho(h_\rho(\theta), x)\right)
= \pi_x\left(T(\theta,x)\right),
$
which proves the $T$-invariance of the graph of
$\widetilde{\varphi}_\rho.$

Now we have to prove that the closure of the graph of
$\widetilde{\varphi}_\rho$ is an attracting set of $T$ with support
$\U_\rho.$
Observe that, for any skew product $R$ on the
cylinder $\Cyl$ and any $n \ge 0,$ $\theta \in \SI$ and $x,z \in \K,$
\[
 \norm{R^n(\theta, x) - R^n(\theta,z)} =
\lvert
     \pi_x\left(R^n(\theta,x)\right) -
     \pi_x\left(R^n(\theta,z)\right)
\rvert.
\]
Therefore, we have to show that
\begin{equation}\label{toprove}
 \lim_{n\to\infty} \left(
   \pi_x\left(T^n(\theta,x)\right) -
   \pi_x\left(T^n(\theta,z(x))\right)
 \right) = 0
\end{equation}
for every $\theta \in \U_\rho$ and $x$ in a subset of $\K$ of positive
Lebesgue measure, and some
$z(x) \in \widetilde{\varphi}_\rho(\theta).$
By assumption we know that
\[
 \lim_{n\to\infty} \left(
   \pi_x\left(S_\rho^n(h_\rho(\theta), x)\right) -
   \pi_x\left(S_\rho^n(h_\rho(\theta), z(x))\right)
 \right) = 0
\]
for every $\theta \in \U_\rho$ and $x$ in a subset of $\K$ of positive
Lebesgue measure, and some
$
z(x) \in
\varphi_\rho\left(h_\rho(\theta)\right) =
\widetilde{\varphi}_\rho(\theta).
$
From \eqref{conjugate} this is equivalent to \eqref{toprove}.
This ends the proof of the theorem.

%%%%%%%%%%%%%%%%%%%%%%%%%%%%%%%%%%%%%%%%%%%%%%%%%%%%%%%%%%%%
%%%%%%%%%%%%%%%%%%%%%%%%%%%%%%%%%%%%%%%%%%%%%%%%%%%%%%%%%%%%
%%%%%%%%%%%%%%%%%%%%%%%%%%%%%%%%%%%%%%%%%%%%%%%%%%%%%%%%%%%%

\end{document}